\documentclass[11pt,leqno]{article}
\usepackage[margin=1in]{geometry} 
\geometry{letterpaper}

\usepackage{amssymb,amsfonts,amsmath,bbm,mathrsfs,stmaryrd,mathtools}
\usepackage{xcolor}
\usepackage{url}

\usepackage{extarrows}

\usepackage[shortlabels]{enumitem}
\usepackage{tensor}

\usepackage{xr}
\externaldocument[ORIG-]{}

\usepackage[T1]{fontenc}

\usepackage[colorlinks,
linkcolor=black!75!red,
citecolor=blue,
pdftitle={},
pdfproducer={pdfLaTeX},
pdfpagemode=None,
bookmarksopen=true,
bookmarksnumbered=true,
backref=page]{hyperref}

\usepackage{tikz}
\usetikzlibrary{arrows,calc,decorations.pathreplacing,decorations.markings,intersections,shapes.geometric,through,fit,shapes.symbols,positioning,decorations.pathmorphing}


\usepackage{braket}

\usepackage[amsmath,thmmarks,hyperref]{ntheorem}
\usepackage{cleveref}

\newcommand\nthalias[1]{\AddToHook{env/#1/begin}{\crefalias{lemma}{#1}}}

\nthalias{definition}
\nthalias{example}
\nthalias{examples}
\nthalias{remark}
\nthalias{remarks}
\nthalias{convention}
\nthalias{notation}
\nthalias{construction}
\nthalias{theoremN}
\nthalias{propositionN}
\nthalias{corollaryN}
\nthalias{lemma}
\nthalias{proposition}
\nthalias{corollary}
\nthalias{theorem}
\nthalias{conjecture}
\nthalias{question}
\nthalias{assumption}


\creflabelformat{enumi}{#2#1#3}

\crefname{section}{Section}{Sections}
\crefformat{section}{#2Section~#1#3} 
\Crefformat{section}{#2Section~#1#3} 

\crefname{subsection}{\S}{\S\S}
\AtBeginDocument{%
  \crefformat{subsection}{#2\S#1#3}%
  \Crefformat{subsection}{#2\S#1#3}%
}

\crefname{subsubsection}{\S}{\S\S}
\AtBeginDocument{%
  \crefformat{subsubsection}{#2\S#1#3}%
  \Crefformat{subsubsection}{#2\S#1#3}%
}

%

\theoremstyle{plain}

\newtheorem{lemma}{Lemma}[section]


\theoremstyle{plain}
\theoremnumbering{Alph}
\newtheorem{theoremN}{Theorem}

\newtheorem{corollaryN}[theoremN]{Corollary}

\theoremstyle{plain}
\theorembodyfont{\upshape}
\theoremsymbol{\ensuremath{\blacklozenge}}

\crefname{definition}{definition}{definitions}
\crefformat{definition}{#2definition~#1#3} 
\Crefformat{definition}{#2Definition~#1#3} 

\crefname{ex}{example}{examples}
\crefformat{example}{#2example~#1#3} 
\Crefformat{example}{#2Example~#1#3} 

\crefname{exs}{example}{examples}
\crefformat{examples}{#2example~#1#3} 
\Crefformat{examples}{#2Example~#1#3} 

\crefname{remark}{remark}{remarks}
\crefformat{remark}{#2remark~#1#3} 
\Crefformat{remark}{#2Remark~#1#3} 

\crefname{remarks}{remark}{remarks}
\crefformat{remarks}{#2remark~#1#3} 
\Crefformat{remarks}{#2Remark~#1#3} 

\crefname{convention}{convention}{conventions}
\crefformat{convention}{#2convention~#1#3} 
\Crefformat{convention}{#2Convention~#1#3} 

\crefname{notation}{notation}{notations}
\crefformat{notation}{#2notation~#1#3} 
\Crefformat{notation}{#2Notation~#1#3} 

\crefname{table}{table}{tables}
\crefformat{table}{#2table~#1#3} 
\Crefformat{table}{#2Table~#1#3}

\crefname{lemma}{lemma}{lemmas}
\crefformat{lemma}{#2lemma~#1#3} 
\Crefformat{lemma}{#2Lemma~#1#3} 

\crefname{proposition}{proposition}{propositions}
\crefformat{proposition}{#2proposition~#1#3} 
\Crefformat{proposition}{#2Proposition~#1#3} 

\crefname{propositionN}{proposition}{propositions}
\crefformat{propositionN}{#2proposition~#1#3} 
\Crefformat{propositionN}{#2Proposition~#1#3} 

\crefname{corollary}{corollary}{corollaries}
\crefformat{corollary}{#2corollary~#1#3} 
\Crefformat{corollary}{#2Corollary~#1#3} 

\crefname{corollaryN}{corollary}{corollaries}
\crefformat{corollaryN}{#2corollary~#1#3} 
\Crefformat{corollaryN}{#2Corollary~#1#3} 

\crefname{theorem}{theorem}{theorems}
\crefformat{theorem}{#2theorem~#1#3} 
\Crefformat{theorem}{#2Theorem~#1#3} 

\crefname{theoremN}{theorem}{theorems}
\crefformat{theoremN}{#2theorem~#1#3} 
\Crefformat{theoremN}{#2Theorem~#1#3} 

\crefname{enumi}{}{}
\crefformat{enumi}{#2#1#3}
\Crefformat{enumi}{#2#1#3}

\crefname{assumption}{assumption}{Assumptions}
\crefformat{assumption}{#2assumption~#1#3} 
\Crefformat{assumption}{#2Assumption~#1#3} 

\crefname{construction}{construction}{Constructions}
\crefformat{construction}{#2construction~#1#3} 
\Crefformat{construction}{#2Construction~#1#3} 

\crefname{question}{question}{Questions}
\crefformat{question}{#2question~#1#3} 
\Crefformat{question}{#2Question~#1#3} 

\crefname{equation}{}{}
\crefformat{equation}{(#2#1#3)} 
\Crefformat{equation}{(#2#1#3)}


\numberwithin{equation}{section}

\theoremstyle{nonumberplain}
\theoremsymbol{\ensuremath{\blacksquare}}

\newtheorem{proof}{Proof}
\newcommand\pf[1]{\newtheorem{#1}{Proof of \Cref{#1}}}

\newcommand\bG{{\mathbb G}}

\newcommand\bK{{\mathbb K}}
\newcommand\bL{{\mathbb L}}
\newcommand\bM{{\mathbb M}}

\newcommand\bR{{\mathbb R}}

\newcommand\bT{{\mathbb T}}

\newcommand\bZ{{\mathbb Z}}

\newcommand\cE{{\mathcal E}}
\newcommand\cF{{\mathcal F}}

\newcommand\fg{{\mathfrak g}}
\newcommand\fh{{\mathfrak h}}

\newcommand\fk{{\mathfrak k}}
\newcommand\fl{{\mathfrak l}}

\newcommand\ft{{\mathfrak t}}

\DeclareMathOperator{\Ad}{Ad}

\DeclareMathOperator{\im}{\mathrm{im}}



\newcommand{\qedhere}{\mbox{}\hfill\ensuremath{\blacksquare}}


\newcommand{\xrightarrowdbl}[2][]{%
  \xrightarrow[#1]{#2}\mathrel{\mkern-14mu}\rightarrow
}



\title{Pointwise-relatively-compact subgroups and trivial-weight-free representations}
\author{Alexandru Chirvasitu}


\begin{document}

\date{}

\newcommand{\Addresses}{{
  \bigskip
  \footnotesize

  \textsc{Department of Mathematics, University at Buffalo}
  \par\nopagebreak
  \textsc{Buffalo, NY 14260-2900, USA}  
  \par\nopagebreak
  \textit{E-mail address}: \texttt{achirvas@buffalo.edu}


}}

\maketitle

\begin{abstract}
  A pointwise-elliptic subset of a topological group is one whose elements all generate relatively-compact subgroups. A connected locally compact group has a dense pointwise-elliptic subgroup if and only if it is an extension by a compact normal subgroup of a semidirect product $\mathbb{L}\rtimes \mathbb{K}$ with connected, simply-connected Lie $\mathbb{L}$, compact Lie $\mathbb{K}$, with the commutator subgroup $\mathbb{K}'$ acting on the Lie algebra $Lie(\mathbb{L})$ with no trivial weights. This extends and recovers a result of Kabenyuk's, providing the analogous classification with $\mathbb{G}$ assumed Lie connected, topologically perfect, with no non-trivial central elliptic elements.
\end{abstract}

\noindent \emph{Key words:
  Lie group;
  derived series;
  elliptic element;
  free group;
  maximal compact;
  nil-radical;
  residual set;
  solvable radical
}

\vspace{.5cm}

\noindent{MSC 2020: 22E15; 54D30; 22D05; 22D12; 22E25; 22E60; 54E52; 17B05
  


}


\section*{Introduction}

A discussion in \cite[paragraph preceding Proposition 2.5]{zbMATH03654393} asks whether connected Lie groups $\bG$ with dense subgroups $\Gamma\le \bG$ consisting of elements which each generate relatively-compact subgroups are necessarily compact. Motivated by this, \cite[Theorem 2]{zbMATH03975248} (also \cite[Theorem 1.6]{MR3578406}) classifies those connected Lie groups
\begin{itemize}[wide]
\item which have dense subgroups $\Gamma\le \bG$ as above;
\item are \emph{topologically perfect} in the sense of having dense derived subgroups;
\item and have no non-trivial central compact subgroups:
\end{itemize}
they are precisely the semidirect products $\bL\rtimes \bK$ with connected, simply-connected, nilpotent $\bL$ and compact, semisimple $\bK$ so that the center $Z(\bK)$ acts on $\bL$ with no non-trivial fixed point. This note extends that characterization (providing a few other equivalent ones) to arbitrary connected locally compact groups. 

For subsets $A,B\subseteq \bG$ of a group we write
\begin{equation*}
  [A,B]:=\left\{[a,b]:=aba^{-1}b^{-1}\ |\ a\in A,\ b\in B\right\}
\end{equation*}
for the set of \emph{commutators} of elements in $A$ and $B$ respectively.

We will refer freely to the notion of a \emph{(pro-)Lie algebra} $Lie(\bG)$ \cite[Definitions 2.6 and 3.10 and Theorem 3.12]{hm_pro-lie-bk} of a connected locally compact group $\bG$, for which theory we refer the reader to \cite{hm_pro-lie-bk} with more specific references where appropriate. The notation will be consistent: $\fg:=Lie(\bG)$, $\fh:=Lie(\bK)$, $\fl:=Lie(\bL)$, etc. A few more vocabulary items will help unwind the result paraphrased in abbreviated form in the abstract.

\begin{itemize}[wide]
\item A \emph{weight} of a compact group $\bK$ with respect to a complex finite-dimensional representation $\bK\xrightarrow{\rho} \mathrm{GL}(V)$ is a 1-dimensional sub-representation of the restriction $\rho|_{\bT}$ for a maximal \emph{pro-torus} (i.e. \cite[Definitions 9.30]{hm5} compact connected abelian subgroup) $\bT\le \bK_0$ of the identity component of $\bK$.

  The term applies to real representations as well, after complexification. 
  
\item Subsets of (always Hausdorff) topological groups are \emph{elliptic} when contained in compact subgroups, and \emph{pointwise-elliptic} when they consist of elliptic elements. 
\end{itemize}

\begin{theoremN}\label{thn:dns.ell.sbgp}
  The following conditions on a locally compact connected group $\bG$ are equivalent.
  
  \begin{enumerate}[(1),wide=0pt]
  \item\label{item:thn:dns.ell.sbgp:pt.ell.sbg} $\bG$ has a dense pointwise-elliptic subgroup.

  \item\label{item:thn:dns.ell.sbgp:pt.ell.sbs} $\bG$ has a dense pointwise-elliptic subset $\Gamma$ with $[\Gamma,\Gamma]$ pointwise-elliptic.

  \item\label{item:thn:dns.ell.sbgp:char} The conjunction of
    
    \begin{enumerate}[(a),wide]
    \item\label{item:thn:dns.ell.sbgp:cpct.ss} the quotient $\bG/\bG_{sol}$ by the \emph{(solvable) radical} \cite[Definition 10.23]{hm_pro-lie-bk} $\bG_{sol}\le \bG$ is compact, and

    \item\label{item:thn:dns.ell.sbgp:0wt} for every (equivalently, any one) maximal compact subgroup $\bK\le \bG$ the adjoint action of the derived subgroup $\bK'\le \bK$ on the quotient space $\fg/\fk$ has no trivial weights.     
    \end{enumerate}
  \item\label{item:thn:dns.ell.sbgp:alm.semidir} $\bG$ is expressible as an extension
    \begin{equation}\label{eq:lk.by.m}
      \{1\}\to
      \left(\bM\text{ compact}\right)
      \lhook\joinrel\xrightarrow{\quad}
      \bG
      \xrightarrowdbl{\quad}
      \bL\rtimes \bK
      \to \{1\}
    \end{equation}
    with $\bL$ Lie, connected, simply-connected and nilpotent, and $\bK'$ acting on $\fl:=Lie(\bL)$ trivial-weight-freely.

    Equivalently, maximal tori $\bT\le \bK'$ act on $\bL$ with no non-trivial fixed points. 
  \end{enumerate}
\end{theoremN}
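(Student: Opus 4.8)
The plan is to establish the cycle (1)$\Rightarrow$(2)$\Rightarrow$(3)$\Rightarrow$(4)$\Rightarrow$(1), plus the equivalence internal to (4). Two of these are immediate. For (1)$\Rightarrow$(2): a pointwise-elliptic \emph{subgroup} $\Gamma$ automatically has $[\Gamma,\Gamma]\subseteq\Gamma$ pointwise-elliptic. For the ``Equivalently'' clause of (4): $\exp\colon\fl\to\bL$ is a $\bK$-equivariant homeomorphism when $\bL$ is connected, simply-connected and nilpotent, so the nonzero $\bT$-fixed vectors of $\fl$ — which are exactly what the trivial weights of $\bK'$ amount to, a weight of $\bK'$ being detected on any maximal torus $\bT\le\bK'$ — correspond precisely to the nontrivial $\bT$-fixed points of $\bL$.

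For (2)$\Rightarrow$(3) I would first observe that condition (2) descends along every continuous surjection $\bG\twoheadrightarrow\overline\bG$: the image of $\Gamma$ is dense, still pointwise-elliptic (a continuous image of a relatively-compact group is relatively compact), and its commutator set is the image of $[\Gamma,\Gamma]$, hence pointwise-elliptic. Writing $\bG=\varprojlim\bG/N$ over the compact normal $N$ with $\bG/N$ Lie \cite{hm_pro-lie-bk} then reduces both (a) and (b) to Lie quotients. For (a): were $\bG/\bG_{sol}$ non-compact it would admit a non-compact almost-simple Lie quotient $S$; the non-elliptic regular $\bR$-split elements of $S$ form a non-empty open set, which the dense image of $\Gamma$ must meet — a contradiction; so the pro-semisimple group $\bG/\bG_{sol}$ has only compact simple factors and is compact. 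For (b) I argue contrapositively. Since $\bK'$ is compact semisimple, a trivial weight on $\fg/\fk\cong Lie(\bG_{sol})/(Lie(\bG_{sol})\cap\fk)$ is the same as a nonzero $\bK'$-fixed vector there; chasing such a vector down the derived series, lower central series and toral part of the solvable radical produces, after a further quotient, one of two configurations. Either $\bG$ acquires a nonzero vector-group quotient, on which a dense subset cannot be pointwise-elliptic; or $\bG$ acquires a quotient $V\rtimes\overline T$ with $V$ a nonzero vector group and $\overline T$ a torus acting \emph{non-trivially} on it, and then the commutator hypothesis breaks: commutators of elements of $V\rtimes\overline T$ lie in $V$, so $[\overline\Gamma,\overline\Gamma]\subseteq V$ is pointwise-elliptic only if trivial, which would force the dense subgroup generated by $\overline\Gamma$ to be abelian and contradict $\overline T$ acting non-trivially on $V$. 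This two-case extraction — carried out from the weak hypothesis (2) and for general locally compact $\bG$ rather than Lie $\bG$ — is, I expect, the main obstacle of the whole proof; it is exactly where the commutator condition in (2) is spent, and it is what rules out the $\bR^{2}\rtimes\mathrm{SO}(2)$-type examples, which have dense pointwise-elliptic subsets but no dense ones with elliptic commutators.

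For (3)$\Rightarrow$(4) I would invoke structure theory. Pick a compact normal $\bM\le\bG$ with $\bG/\bM$ Lie, and enlarge $\bM$ to the preimage of the largest compact normal subgroup of $\bG/\bM$, so that in addition $\bG/\bM$ has no non-trivial compact normal subgroup; (a) and (b) pass to $\bG/\bM$, the quotient map identifying the solvable radicals and the $\bK$-modules $\fg/\fk$. It then suffices to check that a connected Lie group with compact $\bG/\bG_{sol}$, with $\bK'$ trivial-weight-free on $\fg/\fk$, and with no non-trivial compact normal subgroup is a semidirect product $\bL\rtimes\bK$ with $\bL$ simply-connected nilpotent: using the structure of connected solvable Lie groups one sees that $\bG_{sol}$ is simply-connected and nilpotent and that maximal compact subgroups $\bK$ complement it, and after splitting off the central torus of $\bK$ one is in the topologically-perfect situation classified in \cite[Theorem 2]{zbMATH03975248} (equivalently \cite[Theorem 1.6]{MR3578406}).

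Finally (4)$\Rightarrow$(1). Pulling a dense pointwise-elliptic subgroup back along the compact extension $\bM\to\bG\to\bL\rtimes\bK$, it suffices to treat $\bL\rtimes\bK$ itself, and here the ``free group'' mechanism appears. By (b) the locus $\{k\in\bK:\Ad(k)-\id\text{ is invertible on }\fl\}$ is dense, and for any such $k$ the equation $\ell=x^{-1}(k\cdot x)$ is solvable for $x\in\bL$ (inverse function theorem plus nilpotence), so every $(\ell,k)$ with $k$ in this locus is conjugate into $\bK$, hence elliptic. I would then pick a dense \emph{free} subgroup $H\le\bK$ whose non-identity elements lie in this locus — available because $\bK'$ is sufficiently non-abelian — together with a generic $1$-cocycle $c\colon H\to\bL$; the graph $\Gamma=\{(c(h),h):h\in H\}$ is pointwise-elliptic by the previous sentence, while $\overline\Gamma=\bL\rtimes\bK$ because the closure of such a graph can fail to be everything only when $c$ is a coboundary (using $H^{1}_{\mathrm{cts}}(\bK,\bL)=0$), which a generic cocycle is not. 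The reductions in this step and in (3)$\Rightarrow$(4) are essentially bookkeeping; the genuine difficulty is concentrated in the trivial-weight half of (2)$\Rightarrow$(3).
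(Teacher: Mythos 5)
Your overall architecture (the cycle $(1)\Rightarrow(2)\Rightarrow(3)\Rightarrow(4)\Rightarrow(1)$, the reduction to Lie quotients, the use of the companion paper's Theorem~B to dispose of condition (a) and of the $\bK$-version of (b)) matches the paper, and the easy implications are handled correctly. But the step you yourself identify as the crux --- extracting a contradiction from a trivial $\bK'$-weight in $(2)\Rightarrow(3)$ --- has a genuine gap. Your plan is to ``chase the fixed vector down'' to a quotient of $\bG$ that is either a vector group or of the form $V\rtimes\overline{T}$ with $\overline{T}$ a torus, and then use that commutators land in $V$. This extraction fails in the typical hard case: the trivial $\bK'$-weight can sit in a deeper layer of the lower central series of the nil-radical --- say $\bK=\mathrm{SU}(2)\times\bT^1$ acting on a Heisenberg-type $\bG_{nil}$ with $\bK'=\mathrm{SU}(2)$ acting trivially on the center $Z(\bG_{nil})=[\bG_{nil},\bG_{nil}]\cong\bR^2$ while the central torus rotates it (so the $\bK$-version of (b) holds and Theorem~B is silent). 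That fixed subspace is a central normal \emph{subgroup}, not a quotient: no $\bK$-invariant complement to it inside $\bG_{nil}$ is normal (indeed the complement to a Heisenberg center is not even a subgroup), so $\bG$ has no vector-group quotient exhibiting the trivial weight. Moreover any quotient of $\bG$ retaining this data also retains the non-abelian $\bK'$, so ``commutators of $\overline\Gamma$ lie in $V$'' is false --- commutators fill up all of $\bK'$. Nothing in your proposal replaces this. The paper's mechanism here is entirely different: since every element of $\bK'$ is a \emph{single} commutator of elements of $\bK$ (Got\^o/Hofmann--Morris), density of $\Gamma$ forces every $(v,s)\in\bG_{nil}\rtimes\bK'$ to be a limit of elliptic commutators $t=(v_t,s_t)$ with $v_t\in\{w^{-1}\Ad_{s_t}w\}$, and a transversality argument (a Whitney-product decomposition of $U\times\bG_{nil}$ into kernel and image bundles near an $s$ minimizing $\dim\ker(1-\Ad_s|_{\fg_{nil}})$) shows such limits cannot converge to $(v,s)$ with $v\ne 1$ fixed by $\Ad_s$. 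You need this, or an equivalent, for the implication to close.

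Two smaller soft spots. In $(3)\Rightarrow(4)$ you assert that (a), (b) and the absence of compact normal subgroups force $\bG_{sol}$ to be simply-connected nilpotent ``using the structure of connected solvable Lie groups''; this is true but not free, and the paper instead obtains the extension \Cref{eq:lk.by.m} from Wu's theorem \emph{after} knowing the set of elliptic elements is dense, i.e. from $(1)$ rather than from $(3)$ alone, so if you keep your cycle order you must actually supply this structural argument. In $(4)\Rightarrow(1)$ the claim that a ``generic'' cocycle $c\colon H\to\bL$ has dense graph needs justification: the closure of the graph is a closed subgroup meeting $\bL$ in some normal $N$, and you must arrange $N=\bL$, which is not implied by $c$ failing to be a coboundary. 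The paper sidesteps this via residuality of topologically-generating tuples (Breuillard--Gelander--Souto--Storm) intersected with Epstein's and Borel's residual sets, together with \Cref{le:im.ndg}; your eigenvalue criterion for ellipticity of individual elements is, however, essentially the paper's.
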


The fact that ``many'' free tuples generating pointwise-elliptic groups exist when $\bG$ is Lie will be a byproduct. Recall \cite[pre Theorem 9.2]{oxt_meas-cat_2e_1980} that a \emph{residual} subset of a locally compact Hausdorff topological space is one which contains a countable intersection of dense opens. 

\begin{corollaryN}\label{corn:when.lie}
  Let $\bG$ be a non-solvable connected Lie group satisfying the equivalent conditions of \Cref{thn:dns.ell.sbgp}.

  For sufficiently large positive integers $d$ there are $d$-tuples in $\bG$ generating dense free pointwise-elliptic subgroups of $\bG$, and the set of such tuples is residual.  \qedhere
\end{corollaryN}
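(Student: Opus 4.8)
The plan is to parametrize pointwise-elliptic $d$-tuples by a conjugation map and run a Baire-category argument on the parameter manifold. Fix a maximal compact subgroup $\bK\le\bG$; being connected, $\bG$ is diffeomorphic to $\bK\times\bR^{n}$, so $\bK$ is connected; by condition \ref{item:thn:dns.ell.sbgp:char} of \Cref{thn:dns.ell.sbgp} the quotient $\bG/\bG_{sol}$ is compact, so $\bK$ maps onto it, and (same condition) $\bK'$ acts on $\fg/\fk$ with no trivial weight. Since every relatively-compact cyclic subgroup of $\bG$ lies in some conjugate of $\bK$, the real-analytic map
\[
  \Psi\colon (\bG\times\bK)^{d}\longrightarrow \bG^{d},\qquad \big((g_i,k_i)\big)_i\longmapsto\big(g_ik_ig_i^{-1}\big)_i
\]
has image exactly the set of pointwise-elliptic $d$-tuples, and each of its values generates a pointwise-elliptic subgroup. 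It thus suffices to prove that for all sufficiently large $d$ the set
\[
  \cR:=\Bigl\{\big((g_i,k_i)\big)_i:\ \langle g_1k_1g_1^{-1},\dots,g_dk_dg_d^{-1}\rangle\ \text{is both dense in}\ \bG\ \text{and free}\Bigr\}
\]
is residual in the connected, locally compact, second-countable — hence Baire — manifold $(\bG\times\bK)^{d}$; residuality for the tuples themselves then follows by transporting along $\Psi$. Write $\cR=\cR_{\mathrm{free}}\cap\cR_{\mathrm{dns}}$.

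That $\cR_{\mathrm{free}}$ is residual is the usual Tits-style argument. As $\bG$ is not solvable, $\bG/\bG_{sol}$ is a nontrivial compact semisimple Lie group, so $\bK$ surjects onto it and is nonabelian; hence $\bK'$ is a nontrivial compact connected semisimple Lie group, containing a subgroup locally isomorphic to $SU(2)$ and therefore a free subgroup of any finite rank. Taking $g_i=1$ and the $k_i$ a free basis inside such a subgroup exhibits a free elliptic $d$-tuple for every $d\ge2$. Consequently, for every nontrivial reduced word $w$ in $d$ letters the real-analytic map $\big((g_i,k_i)\big)_i\mapsto w\big(g_1k_1g_1^{-1},\dots,g_dk_dg_d^{-1}\big)$ from $(\bG\times\bK)^{d}$ to $\bG$ is not identically $1$, so the analytic set where it equals $1$ is a proper — hence closed, nowhere dense — subset of the connected manifold $(\bG\times\bK)^{d}$. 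The complement of $\cR_{\mathrm{free}}$ is the countable union of these sets, hence meager.

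The residuality of $\cR_{\mathrm{dns}}$ is the crux, and the only place the trivial-weight hypothesis enters. It is a $G_\delta$ set: if $\{U_m\}$ is a countable base of $\bG$, membership amounts to $\bigcap_m\bigcup_w\bigl\{\big((g_i,k_i)\big)_i:\ w\big(g_1k_1g_1^{-1},\dots,g_dk_dg_d^{-1}\big)\in U_m\bigr\}$, a countable intersection of open sets. So all that must be shown is that $\cR_{\mathrm{dns}}$ is dense — that inside any prescribed open box of conjugation data one can choose generators with dense closed span in $\bG$ — which is a quantitative strengthening of the implication condition \ref{item:thn:dns.ell.sbgp:char} $\Rightarrow$ condition \ref{item:thn:dns.ell.sbgp:pt.ell.sbg} of \Cref{thn:dns.ell.sbgp}. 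I would argue this in two stages. First, using the classical fact that the topologically generating $d$-tuples of a connected compact Lie group form a residual set for $d\ge2$, together with the openness of the conjugate-then-project map $(\bG\times\bK)^{d}\to(\bG/\bG_{sol})^{d}$ (open since $\bG\to\bG/\bG_{sol}$ and $\bK\to\bG/\bG_{sol}$ are quotient maps and conjugation in the compact group $\bG/\bG_{sol}$ is open), arrange that $H:=\overline{\langle g_ik_ig_i^{-1}\rangle}$ surjects densely onto $\bG/\bG_{sol}$. Second — the hard step — show that for a residual choice of the $(g_i,k_i)$ one has $Lie(H)=\fg$, so that $H$ contains the identity component of $\bG$, namely $\bG$. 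For generic $k_i$ the closure $\overline{\langle g_ik_ig_i^{-1}\rangle}$ is a conjugate maximal torus $g_i\bT_ig_i^{-1}$, so $Lie(H)$ contains the conjugate Cartan subalgebras $\Ad(g_i)Lie(\bT_i)$ and is invariant under the tori $\Ad(g_i\bT_ig_i^{-1})$; taking some $k_i$ regular in $\bK'$, some generic in $\bK$, and the $g_i$ in general position, one checks that the smallest subspace of $\fg$ containing those conjugate Cartans and invariant under those conjugate tori is $\fg$ itself. The single structural input here is that $\bK'$ has no trivial weight on $\fg/\fk$: this says a maximal torus of $\bK'$ acts on $\fg/\fk$ with no nonzero fixed vector — in particular $\fg/\fk=[\fk',\fg/\fk]$ — which makes the $\Ad(\bG)$-orbit of a regular element of $\fk'$ span everything transverse to the central torus of $\fk$, so that $H$, having dense image in $\bG/\bG_{sol}$ and containing these conjugate tori, absorbs all of the $\fg/\fk$-directions, while the remaining central directions are supplied by the generic full-torus generators. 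I expect precisely this second stage to be the main obstacle: making ``general position'' precise, bounding how large $d$ must be so that enough conjugate tori are available to exhaust every weight space of $\fg$, and verifying persistence under small perturbations (so that $\cR_{\mathrm{dns}}$ contains a dense, not merely nonempty, $G_\delta$). Everything else is soft Baire category or the cited classical facts.

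Combining, $\cR=\cR_{\mathrm{free}}\cap\cR_{\mathrm{dns}}$ is residual — in particular nonempty — in $(\bG\times\bK)^{d}$ for every $d$ past the threshold coming from the second stage; each point of $\Psi(\cR)$ is a $d$-tuple generating a dense free pointwise-elliptic subgroup of $\bG$, and residuality transports to the set of such tuples.
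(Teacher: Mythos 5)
There is a genuine gap, and it sits at the very center of the argument: you assert that each value of $\Psi$ ``generates a pointwise-elliptic subgroup.'' That is false. A tuple of elliptic elements (each conjugate into $\bK$) need not generate a pointwise-elliptic subgroup, because products of elliptic elements are generally not elliptic. For instance in $\bG=\bC^2\rtimes SU(2)$ (which satisfies the hypotheses), the product of the elliptic elements $(v_1,s)$ and $(v_2,s^{-1})$ is $(v_1+ s\cdot v_2,1)$, which is elliptic only if $v_1+s\cdot v_2=0$. Indeed the whole point of distinguishing conditions \ref{item:thn:dns.ell.sbgp:pt.ell.sbg} and \ref{item:thn:dns.ell.sbgp:pt.ell.sbs} of \Cref{thn:dns.ell.sbgp} from the existence of a dense pointwise-elliptic \emph{subset} (the strictly weaker condition characterized in the cited \cite[Theorem B]{2506.09642v1}) is that passing from a set of elliptic elements to the group they generate is not automatic. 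Your set $\cR$ only encodes density and freeness of the generated group; nothing in your argument controls the ellipticity of non-trivial \emph{words} in the generators, which is the actual content of the corollary. The paper's mechanism for this is \Cref{le:im.ndg} combined with Borel's theorem \cite{zbMATH03845869}: for a residual set of $d$-tuples in the compact semisimple group $\bK'$, every non-trivial reduced word evaluated at the tuple has no eigenvalue $1$ on $\fl/\ft$, and \Cref{le:im.ndg} then forces every non-trivial element of the subgroup of $\bG_{sol}\rtimes\bK'$ generated by the full tuple to be elliptic. Without some such word-by-word eigenvalue control your construction produces dense free subgroups with elliptic \emph{generators}, not pointwise-elliptic subgroups.

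Two further problems, less fatal but real. First, residuality does not ``transport along $\Psi$'': the corollary asserts residuality of the set of good tuples in $\bG^d$, and a residual subset of $(\bG\times\bK)^d$ has image under $\Psi$ about which Baire category in $\bG^d$ says nothing a priori (images of residual sets under non-open maps need not be residual, and $\Psi$ is far from a local homeomorphism). The paper works directly in $\bG^d$, intersecting three residual conditions there (topological generation, via \cite[Lemmas 6.3, 6.4 and Proposition 8.2]{zbMATH05117945}; freeness, via \cite{zbMATH03351914}; and the eigenvalue condition on $\bK'$-components, via \cite{zbMATH03845869}). Second, you flag your ``hard step'' --- density of topologically generating conjugation data --- as an expected obstacle rather than proving it; the paper replaces this entire stage by the cited cluster-residuality results for compact-by-solvable connected Lie groups, so even the density portion of your sketch would need to be completed or replaced by that citation.
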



\subsection*{Acknowledgments}

I am grateful for comments, suggestions and literature pointers from E. Breuillard and T. Gelander. 


\section{Dense subgroups consisting of elliptic elements}\label{se:lg.pt.ell}

We follow \cite[Definitions 10.1 and 10.8]{hm_pro-lie-bk} in denoting by
\begin{equation}\label{eq:der.ser}
  \bG=:\bG^{(0)}\ge \bG^{(1)}\ge \cdots
  \quad\text{and}\quad
  \bG=:\bG^{((0))}\ge \bG^{((1))}\ge \cdots
\end{equation}
the \emph{derives} and \emph{closed derived series} of a topological group respectively. 

\pf{thn:dns.ell.sbgp}
\begin{thn:dns.ell.sbgp}
  The mutual conjugacy \cite[\S 4.13, Theorem, pp.188-189]{mz} of the maximal compact subgroups shows that indeed the quantifier in condition \Cref{item:thn:dns.ell.sbgp:0wt} does not matter. Moreover, expressing $\bG$ as
  \begin{equation*}
    \bG
    \cong
    \varprojlim_{\substack{\bL\trianglelefteq \bG\\\bL\text{ compact}\\\bG/\bL\text{ Lie}}}
    \bG/\bL
    \quad
    \left(\text{\cite[\S 4.6, Theorem]{mz}}\right),
  \end{equation*}
  we can assume $\bG$ lie upon observing that an element is elliptic if and only if its image through a quotient by a compact normal subgroup is, so that the pointwise ellipticity of a subset $\Gamma\le \bG$ is equivalent to that of
  \begin{equation*}
    \bL\cdot\Gamma\le \bG
    \quad\text{and hence}\quad
    \pi(\Gamma)\cong \bL\cdot\Gamma/\bL
    ,\quad
    \bG\xrightarrow{\quad\pi\quad}\bG/\bL
  \end{equation*}
  for any (equivalently, all) compact normal $\bL\trianglelefteq \bG$.

  As one last preliminary simplification, note that \Cref{item:thn:dns.ell.sbgp:cpct.ss} and the weaker version of \Cref{item:thn:dns.ell.sbgp:0wt} requiring that $\bK$ itself (rather than $\bK'$) act trivial-weight-freely are jointly equivalent \cite[Theorem B]{2506.09642v1} to $\bG$ containing a dense pointwise-elliptic sub\emph{set}, so we can assume that conjunction to hold throughout the present proof. 
  
  \begin{enumerate}[label={},wide]
    
  \item\textbf{\Cref{item:thn:dns.ell.sbgp:pt.ell.sbg} $\Rightarrow$ \Cref{item:thn:dns.ell.sbgp:pt.ell.sbs}} requires no argument. 
    
  \item\textbf{\Cref{item:thn:dns.ell.sbgp:pt.ell.sbs} $\Rightarrow$ \Cref{item:thn:dns.ell.sbgp:char}:} The unique \cite[\S 1.8]{ragh} maximal compact subgroup of the \emph{nil-radical} (\cite[Definition 10.40]{hm_pro-lie-bk}, \cite[Theorem 3.18.13]{var_lie}) $\bG_{nil}\le \bG$ is normal in $\bG$, so can be ignored by the proof's prefatory remarks. We can thus assume $\bG_{nil}$ simply-connected (and hence \cite[Theorem 3.6.2]{var_lie} analytically isomorphic to its Lie algebra via the exponential map). Thus:
    \begin{equation}\label{eq:gp.chn}
      \begin{tikzpicture}[>=stealth,auto,baseline=(current  bounding  box.center)]
        \path[anchor=base] 
        (0,0) node (g) {$\bG$}
        +(0,-1) node (gsol) {$\bG_{sol}$}
        +(0,-2) node (gnil) {$\bG_{nil}$}
        +(0,-3) node (1) {$\{1\}$}
        ;
        \draw[-] (g) to[bend left=0] node[pos=.5,auto] {$\scriptstyle $} (gsol);
        \draw[-] (gsol) to[bend left=0] node[pos=.5,auto] {$\scriptstyle $} (gnil);
        \draw[-] (gnil) to[bend left=0] node[pos=.5,auto] {$\scriptstyle $} (1);        
        \draw[decorate,decoration = {brace},thick] (1,0) to[bend left=0] node[pos=.5,auto] {$\scriptstyle\quad \bG/\bG_{sol}\cong \bK'/\left(\text{finite}\right)$} (1,-.8);
        \draw[decorate,decoration = {brace},thick] (1,-1) to[bend left=0] node[pos=.5,auto] {$\scriptstyle\quad \bG_{sol}/\bG_{nil}\cong \bR^d\times \bT^e$} (1,-1.8);
        \draw[decorate,decoration = {brace},thick] (1,-2) to[bend left=0] node[pos=.5,auto] {$\scriptstyle\quad \bG_{nil}\text{ nilpotent simply-connected}$} (1,-2.8);
        \draw[decorate,decoration = {brace},thick] (-1,-1.8) to[bend left=0] node[pos=.5,auto] {$\scriptstyle\quad \text{\parbox{3.5cm}{$\bG/\bG_{nil}$ contains (the image of) $\bK$ as a maximal compact subgroup}}$} (-1,0);
      \end{tikzpicture}
    \end{equation}
    where the abelianness of $\bG_{sol}/\bG_{nil}$ follows from \cite[Theorem 3.8.3(iii)]{var_lie}.

    
    As indicated in \Cref{eq:gp.chn}, we identify $\bK$ with its image through
    \begin{equation*}
      \bG
      \xrightarrowdbl{\quad\pi\quad}
      \overline{\bG}:=\bG/\bG_{nil}. 
    \end{equation*}
    Observe next that
    \begin{itemize}[wide]
    \item the commutator subgroup $\overline{\bG}^{((1))}\le \overline{\bG}$ is contained in $\bR^d\cdot \bK'$;

    \item and the elements of $\bK'$ are \emph{precisely} \cite[Theorem 9.2]{hm5} the commutators $[s,t]:=sts^{-1}t^{-1}$, $s,t\in \bK$.
    \end{itemize}
    $\Gamma$ being dense with pointwise-elliptic commutator set $[\Gamma,\Gamma]$, it follows that arbitrary elements
    \begin{equation*}
      (v,s)\in \bG_{nil}\cdot \bK'\cong \bG_{nil}\rtimes \bK'
    \end{equation*}
    are arbitrarily approximable by elliptic $t\in [\Gamma,\Gamma]$ with
    \begin{equation*}
      \pi(t)\in \bR^d\cdot \bK'\le \overline{\bG}=\bG/\bG_{nil}.
    \end{equation*}
    The maximal compact subgroups of $\bR^d\cdot \bK'$ are conjugate to $\bK'$, so we may assume all such $\pi(t)$ contained in $\bK'$ by \cite[Theorem 1.12]{2506.09642v1}. We can thus decompose
    \begin{equation}\label{eq:tvs}
      t=(v_t,s_t)\in \bG_{nil}\cdot \bK'\cong \bG_{nil}\rtimes \bK'
      ,\quad
      v_t\in \left\{w^{-1}\cdot \Ad_{s_t}w\ :\ w\in \bG_{nil}\right\}      
    \end{equation}
    (the latter inclusion claim following from the assumed ellipticity of $t$ and \cite[Lemma 1.7(1)]{2506.09642v1}).

    Choose $s\in \bK'$ so as to minimize $\dim\ker\left(1-\Ad_s|_{\fg_{nil}}\right)$. For sufficiently $s'$ ranging over a sufficiently small neighborhood $U\ni s\in \bK'$ the decompositions
    \begin{equation*}
      \fg_{nil}
      =
      \ker\left(1-\Ad_{s'}\right)
      \oplus
      \im\left(1-\Ad_{s'}\right)
    \end{equation*}
    glue into a decomposition $U\times \fg_{nil}\cong \cE_{\ker}\oplus \cE_{\im}$ as a sum of trivial bundles, which the analytic isomorphism $\fg_{nil}\cong \bG_{nil}$ induced by the exponential map then turns into a decomposition
    \begin{equation*}
      U\times \bG_{nil}
      \cong
      \cF_{\ker}\times_{U}\cF_{\im}
    \end{equation*}
    as a \emph{Whitney product} \cite[\S 4]{zbMATH03056184} of trivial fiber bundles, with respective fibers
    \begin{equation*}
      \begin{aligned}
        \cF_{\ker,s'}
        &=\left\{g\in \bG_{nil}\ :\ \Ad_{s'}g=g \right\}\\
        \cF_{\im,s'}
        &=\left\{h^{-1}\cdot \Ad_{s'}h\ :\ h\in \bG_{nil} \right\}.
      \end{aligned}      
    \end{equation*}
    It follows that if $\dim\ker\left(1-\Ad_s|_{\fg_{nil}}\right)$ is minimal for $s\in \bK'$, then the $(v_t,s_t)$ of \Cref{eq:tvs} cannot approach $(v,s)\in \bG_{nil}\cdot \bK'$, $\Ad_s v=v$ unless $v$ is trivial.

    In summary: $\bK'$ must act on $\fg_{nil}$ trivial-weight-freely. This suffices, given that the action on the $\bR^d\le \fg_{sol}/\fg_{nil}$ of \Cref{eq:gp.chn} has no trivial weights by \cite[Theorem 1.3]{2506.09642v1}. 
    
  \item\textbf{\Cref{item:thn:dns.ell.sbgp:char} $\Rightarrow$ \Cref{item:thn:dns.ell.sbgp:pt.ell.sbg}:} We retain the standing assumption that $\bG$ is Lie. A further bit of simplification will be to substitute for $\bG$ the middle term of the top row in
    \begin{equation*}
      \begin{tikzpicture}[>=stealth,auto,baseline=(current  bounding  box.center)]
        \path[anchor=base] 
        (0,0) node (l) {$\bG_{sol}$}
        (-1.5,0) node (ll) {$\{1\}$}
        +(2.8,.5) node (um) {$\bullet$}
        +(2.8,-.5) node (dm) {$\bG$}
        +(4.5,.5) node (ur) {$\bK'$}
        +(4.5,-.5) node (dr) {$\bG/\bG_{sol}$}
        +(6.5,0) node (rr) {$\{1\}$}
        ;
        \draw[->] (ll) to[bend left=0] node[pos=.5,auto] {$\scriptstyle $} (l);
        \draw[right hook->] (l) to[bend left=6] node[pos=.5,auto] {$\scriptstyle $} (um);
        \draw[right hook->] (l) to[bend right=6] node[pos=.5,auto] {$\scriptstyle $} (dm);
        \draw[->] (um) to[bend left=6] node[pos=.5,auto] {$\scriptstyle $} (ur);
        \draw[->] (dm) to[bend right=6] node[pos=.5,auto] {$\scriptstyle $} (dr);
        \draw[->] (ur) to[bend left=6] node[pos=.5,auto,swap] {$\scriptstyle $} (rr);
        \draw[->] (ur) to[bend left=6] node[pos=.5,auto,swap] {$\scriptstyle $} (rr);
        \draw[->] (dr) to[bend right=6] node[pos=.5,auto,swap] {$\scriptstyle $} (rr);
        \draw[->>] (um) to[bend left=0] node[pos=.5,auto,swap] {$\scriptstyle $} (dm);
        \draw[->>] (ur) to[bend left=0] node[pos=.5,auto,swap] {$\scriptstyle $} (dr);
      \end{tikzpicture}
    \end{equation*}
    thus ensuring the splitting $\bG\cong \bG_{sol}\rtimes \bK'$.

    We will work with finitely-generated dense subgroups $\Gamma\le \bG$, which do always exist: select, say, f.g. topologically-generating subgroups for each of the individual factors in the decomposition \cite[\S 4.13, Theorem, pp.188-189]{mz}
    \begin{equation*}
      \bG = \bL_1\cdot\bL_2\cdots \bL_k\cdot \bK
      ,\quad
      \begin{aligned}
        \bK&\le \bG\text{ maximal compact}\\
        \bR\cong \bL_i&=\exp \fl_i\text{ for lines }\fl_i\le \fg:=Lie(\bG)
      \end{aligned}
    \end{equation*}
    Now, having fixed a sufficiently large $d\in \bZ_{\ge 0}$:
    \begin{enumerate}[(a),wide]
    \item\label{item:thn:dns.ell.sbgp:pf.gen.resid} \cite[Lemmas 6.3 and 6.4]{zbMATH05117945} ensure that the (non-empty, as just seen) set
      \begin{equation*}
        \left\{
          (s_i)_{i=1}^d\in \bG^d
          \ :\
          \overline{\Braket{s_i,\ 1\le i\le d}}
          =
          \bG
        \right\}
        \subseteq
        \bG^d
      \end{equation*}
      of topologically-generating $d$-tuples is at least what might be termed \emph{cluster-residual}: each of its members has a neighborhood where the set is residual. It follows from this and the fact \cite[Proposition 8.2]{zbMATH05117945} that for \emph{compact} connected Lie groups free tuples generating dense subgroups is residual that this is so also for compact-by-solvable connected Lie groups (i.e. compact quotient, solvable kernel), which $\bG$ is by assumption. 
      

    \item\label{item:thn:dns.ell.sbgp:pf.fr.resid} On the other hand, the set of $d$-tuples generating free subgroups of non-solvable connected Lie groups is residual in any case by \cite[Theorem and its proof]{zbMATH03351914}; so, then, is the set of $d$-tuples whose $\bK'$-components in $\bG\cong \bG_{sol}\rtimes\bK'$ generate free groups therein.
    \end{enumerate}
    It follows that for non-solvable \emph{Lie} groups satisfying \Cref{item:thn:dns.ell.sbgp:char} the conclusion holds in the stronger form of \Cref{corn:when.lie}. Recall, in order to conclude, that the set of $d$-tuples in the semisimple compact Lie group $\bK'$ generating free groups whose non-trivial elements satisfy the 1-eigenvalue condition of \Cref{le:im.ndg} is residual: this follows from \cite[Theorem 2]{zbMATH03845869} and the accompanying \cite[Remark 5.(i), p.160]{zbMATH03845869}. Combining this with \Cref{item:thn:dns.ell.sbgp:pf.gen.resid} and \Cref{item:thn:dns.ell.sbgp:pf.fr.resid} yields the desired conclusion. 

    Observing that there is nothing to prove when $\bG$ is solvable (for in that case condition \Cref{item:thn:dns.ell.sbgp:0wt} requires compactness), we are done. The symbol `($\bullet$)' henceforth stands for the mutually-equivalent cluster consisting of the statement's three conditions. 

  \item\textbf{The two versions of \Cref{item:thn:dns.ell.sbgp:alm.semidir}} are mutually equivalent because the exponential function $\fl\xrightarrow{\exp}\bL$ is an analytic diffeomorphism for connected, simply-connected nilpotent Lie groups \cite[Theorem 3.6.2.]{var_lie}. 

  \item\textbf{($\bullet$) $\Leftrightarrow$ \Cref{item:thn:dns.ell.sbgp:alm.semidir}:} The opening paragraph of \cite{MR1044968} effectively argues that $\bG$ fits into an extension \Cref{eq:lk.by.m} as soon as its set of elliptic elements is dense. \Cref{item:thn:dns.ell.sbgp:char} and \Cref{item:thn:dns.ell.sbgp:alm.semidir} both being invariant under quotienting out compact normal subgroups, we may as well specialize to groups of the form $\bL\rtimes \bK$ for compact Lie $\bK$ and connected, simply-connected nilpotent $\bL$. That for such groups \Cref{item:thn:dns.ell.sbgp:char} and \Cref{item:thn:dns.ell.sbgp:alm.semidir} are equivalent, though, is immediate.  \qedhere
  \end{enumerate}
\end{thn:dns.ell.sbgp}


To put the following statement in perspective, recall \cite[Proposition 1.18]{2506.09642v1} that compact groups acting on connected locally compact groups always leave (some) maximal compact subgroups thereof invariant. 

\begin{lemma}\label{le:im.ndg}
  Let $\bK$ be a compact group acting on a solvable connected Lie group $\bL$, and $\bT\le \bL$ a $\bK$-invariant maximal torus. Every element of $\bL\rtimes \bK$ whose $\bK$-component has no 1-eigenvectors in
  \begin{equation*}
    \fl/\ft
    ,\quad
    \fl:=Lie(\bL)
    ,\quad
    \ft:=Lie(\bT)
  \end{equation*}
  is elliptic. 
\end{lemma}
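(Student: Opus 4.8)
The plan is to show that every such $(v,s)$ is conjugate, inside $\bL\rtimes\bK$, into the subgroup $\bT\rtimes\bK$; this suffices because, $\bT$ being $\bK$-invariant, $\bT\rtimes\bK$ is a \emph{compact} subgroup of $\bL\rtimes\bK$ (homeomorphic to $\bT\times\bK$). It helps to keep in mind the equivalent geometric picture: the homogeneous space $(\bL\rtimes\bK)/(\bT\rtimes\bK)\cong\bL/\bT$ is diffeomorphic to a Euclidean space, and conjugating $(v,s)$ into the stabilizer $\bT\rtimes\bK$ amounts to producing a fixed point for $(v,s)$ acting on it. Since conjugating $(v,s)$ by $(w,1)$, $w\in\bL$, gives $\bigl(w\,v\,\Ad_s(w)^{-1},\,s\bigr)$, the task is ultimately to show that the $\Ad_s$-twisted conjugacy class of $v$ meets $\bT$.

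First I would reduce to a convenient $\bL$. The maximal compact subgroup of the nilradical $\bN:=\bL_{nil}$ is a torus, characteristic in $\bN$, hence normal in $\bL$ and $\bK$-invariant, and it lies in $\bT$; since an element of a topological group is elliptic exactly when its image under a quotient by a compact normal subgroup is, and since passing to that quotient affects neither $\fl/\ft$ nor the hypothesis on $s$, I may assume $\bN$ simply connected. Then $\exp\colon\fn\to\bN$ is a diffeomorphism and $\bN\cap\bT=\{1\}$.

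The heart of the argument would be a d\'evissage on $\dim\bL$, peeling $\bK$-stable abelian normal subgroups off the bottom. Choose a nonzero $\bK$-stable ideal $\fa$ of $\fl$ contained in the center $\fz(\fn)$ (itself an ideal of $\fl$ and $\bK$-stable) — say a minimal one — and set $\bA:=\exp\fa\cong\bR^m$, a $\bK$-invariant abelian normal subgroup of $\bL$ with $\bA\cap\bT=\{1\}$. In $\bL/\bA$ the image $\bar\bT$ of $\bT$ is again a $\bK$-invariant maximal torus, and $Lie(\bL/\bA)/Lie(\bar\bT)\cong\fl/(\ft\oplus\fa)$ is the quotient of the $\bK$-module $\fl/\ft$ by its $\bK$-submodule $\fa$ (using $\fa\cap\ft=\{0\}$). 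Here compactness of $\bK$ is used essentially: $\fl/\ft$ is completely reducible, so $\fa$ is a direct summand and the hypothesis ``$s$ has no $1$-eigenvector on $\fl/\ft$'' descends to $\fl/(\ft\oplus\fa)$. By the inductive hypothesis the image of $(v,s)$ in $(\bL/\bA)\rtimes\bK$ is conjugate into $\bar\bT\rtimes\bK$; lifting the conjugator to $\bL\rtimes\bK$, I may assume $v\in\bA\bT$, say $v=a\tau$ with $a\in\bA$, $\tau\in\bT$. To finish, conjugating $(a\tau,s)$ by $b\in\bA$ replaces $a$ by $a+(1-\Phi)(b)$, where $\Phi\in\mathrm{GL}(\fa)$ is the composite of the conjugation action of $\tau$ and the $\bK$-action of $s$ on $\fa$; so once $1$ is not an eigenvalue of $\Phi$ one solves $(1-\Phi)(b)=-a$, lands $v$ in $\bT$, and the induction closes. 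The base case $\bL=\bT$ is trivial, $\bL\rtimes\bK$ being compact outright.

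The step I expect to be the genuine obstacle is precisely this last invertibility of $1-\Phi$, that is, controlling $\Ad_\tau\circ\Ad_s$ on $\fa$ when $\tau$ — the residual torus part produced by the induction — is essentially arbitrary in $\bT$ and its conjugation action on $\fn$ need not commute with that of $s$. When $\bT$ acts trivially on $\fa$ (in particular when $\bT$ is central in $\bL$) this collapses to the invertibility of $1-\Ad_s$ on $\fa\subseteq\fl/\ft$, which is exactly the hypothesis; in general it is where the precise eigenvalue condition on $s$ has to be combined with the structure of the $\bK$-invariant maximal torus $\bT$, and where one should be prepared to conjugate by elements of $\bL\rtimes\bK$ rather than merely of $\bL$. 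Everything else — the reductions, the d\'evissage, and the descent of the hypothesis through complete reducibility — is routine.
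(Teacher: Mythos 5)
Your overall strategy is the same as the paper's: an induction that peels a $\bK$-stable connected abelian normal subgroup off the bottom of $\bL$ (the paper uses the last nontrivial term $\bL^{((\mathrm{max}))}$ of the closed derived series rather than a minimal $\bK$-stable central ideal of the nilradical), conjugates the image of $(v,s)$ in the quotient into a compact subgroup, and then falls back on the abelian case. The preliminary reductions, the descent of the no-$1$-eigenvector hypothesis to $\fl/(\ft\oplus\fa)$, and the base case are all fine.

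But the step you flagged as ``the genuine obstacle'' is a genuine gap, not a routine verification, and as you have set things up it cannot be closed from the stated hypothesis. After the inductive step you must invert $1-\Phi$ on $\fa$ with $\Phi=\Ad_\tau\circ\Ad_s$, and the hypothesis controls only $\Ad_s$; the torus element $\tau$ produced by the induction can cancel it. Concretely, take $\bL=\bC\rtimes\bT$ the Euclidean motion group (with $\bT=S^1$ rotating $\bC$), $\bK=S^1$ acting on $\bL$ by rotating the $\bC$-factor and fixing $\bT$, $\fa=\bC$, and $s=e^{i\theta}$ with $\theta\notin 2\pi\bZ$: then $\Ad_s$ has no $1$-eigenvector on $\fl/\ft\cong\bC$, yet for $\tau=e^{-i\theta}$ the composite $\Phi$ is the identity on $\fa$, and the element $\left((a,\tau),s\right)$ with $a\neq 0$ has unbounded powers. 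So whatever completes the argument must use information beyond the eigenvalue condition on $s$ alone (note that this configuration puts pressure even on the literal statement of the lemma, so the repair is not cosmetic). For what it is worth, the paper's own proof faces exactly the same obligation: after quotienting by $\bL^{((\mathrm{max}))}$ it places the image of $t$ in a maximal compact subgroup $\bM\rtimes\bK$ and then applies the abelian-case lemma of the companion paper to $\bL^{((\mathrm{max}))}\rtimes(\bM\rtimes\bK)$, whose hypothesis concerns the full compact component $(m,s)$ rather than $s$; that verification is delegated to the citation rather than carried out. You have located the crux correctly, but your proposal does not prove the lemma.
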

\begin{proof}
  \cite[Lemma 1.7]{2506.09642v1} settles the abelian-$\bL$ case so we can assume, inductively on the length of $\left(\bL^{((n))}\right)_n$, that an element
  \begin{equation*}
    t
    =
    (v,s)\in \bG:=\bL\rtimes \bK
    ,\quad
    1-\Ad_s\text{ invertible on }\fl/\ft
  \end{equation*}
  has elliptic image in the quotient $\bL/\bL^{((\mathrm{max}))}$ by the bottom (abelian) layer of the derived series. Additionally, since quotienting out compact normal subgroups is harmless (as observed in the introductory remarks to the proof of \Cref{thn:dns.ell.sbgp}), we can assume there is no torus component to $\bL^{((\mathrm{max}))}\cong \bR^d$. 

  The image of $t$ in $\bG/\bL^{((\mathrm{max}))}$ belongs to some maximal compact subgroup thereof, which surjects onto $\bK$ and hence splits as
  \begin{equation*}
    \bM\rtimes \bK
    ,\quad
    \bM\le \bL/\bL^{((\mathrm{max}))}\text{ maximal compact}.
  \end{equation*}
  We can thus substitute for $\bG$ the (automatically split \cite[\S VII.3.2, Proposition 3]{bourb_int_7-9_en}) extension
  \begin{equation*}
    \{1\}
    \to
    \bL^{((\mathrm{max}))}
    \lhook\joinrel\xrightarrow{\quad}
    \bL^{((\mathrm{max}))}\rtimes \left(\bM\rtimes \bK\right)
    \xrightarrowdbl{\quad}
    \bM\rtimes \bK
    \to \{1\},
  \end{equation*}
  again falling back on the abelian-$\bL$ case to conclude.
\end{proof}

\addcontentsline{toc}{section}{References}

\def\polhk#1{\setbox0=\hbox{#1}{\ooalign{\hidewidth
  \lower1.5ex\hbox{`}\hidewidth\crcr\unhbox0}}}

\Addresses

\end{document}